\Crefname{paragraph}{Section}{Sections}
\definecolor{by}{rgb}{0.74, 0.2, 0.64}
\crefname{theo}{theorem}{theorems}
\providecommand{\keywords}[1]{\noindent {\textit{Keywords:}} #1}
\theoremstyle{plain} 
\newtheorem{prop}{Proposition}[section] 
\newtheorem{theorem}[prop]{Theorem}
\newtheorem{cor}[prop]{Corollary}
\theoremstyle{definition}
\newtheorem{defi}[prop]{Definition}
\newtheorem{rmk}[prop]{Remark}
\newtheorem{assumption}[prop]{Hypothesis}
\newtheorem{example}[prop]{Example}
\let\original@addcontentsline\addcontentsline
\newcommand{\dummy@addcontentsline}[3]{}
\newcommand{\DeactivateToc}{\let\addcontentsline\dummy@addcontentsline}
\newcommand{\ActivateToc}{\let\addcontentsline\original@addcontentsline}
\begin{document}

\title{HJB equation for
maximization of wealth under insider trading}
\author{Jorge A. Le\'on\thanks{Departamento de Control Autom\'atico, Cinvestav-IPN,
Apartado Postal 14-740, 07000 CDMX, Mexico \texttt{jleon@ctrl.cinvestav.mx}} \and Liliana Peralta\thanks{Corresponding author: Departamento de Matem\'aticas, Facultad de Ciencias, Universidad Nacional Aut\'onoma de M\'exico, Circuito Exterior, C.U., 04510 CDMX, Mexico. \texttt{lylyaanaa@ciencias.unam.mx}} \and Iv\'an Rodr\'iguez\thanks{Centro de Investigaci\'on en Matem\'aticas, Jalisco S/N, Col.Valenciana, C.P. 36000, Guanajuato, Mexico \texttt{ivan.rodriguez@cimat.mx}}}

\maketitle

\begin{abstract}
In this paper, we combine the techniques of enlargement of filtrations and stochastic
control theory to establish an extension of the verification theorem, where the coefficients of the stochastic controlled equation are adapted to the underlying filtration and the controls are adapted to a bigger filtration $\mathbf{G}$ than the one generated by the corresponding Brownian motion
$B$. 
Using the forward integral defined by Russo and Vallois \cite{RV1993}, we show that
there is a $\mathbf{G}$-adapted optimal control with respect to a certain cost functional  if and only if the Brownian motion $B$ is a $\mathbf{G}$-semimartingale.  The extended verification theorem allows us to study a financial market with an insider in order to take advantage of the extra information that the insider has from the beginning. Finally, we consider two examples  throughout the extended verification theorem.
These problems appear in financial markets with an insider.

\end{abstract}
\keywords{Cost and value functions, Enlargement of the filtrations, Forward integral, HJB-equation, It\^o's formula for adapted random fields, Semimartingales, Verification theorem\\
\noindent \textit{Mathematical Subject Classification:} 93E20 34H05 49L99 60H05}
\small
\tableofcontents
\normalsize

\section{Introduction}
The theory of enlargement of a filtration was initiated in 1976  by It\^o \cite{ito76}.  This author has pointed out that one way to extend the domain of the stochastic integral 
 (in the It\^o sense) with respect to an
$\mathbf{F}$-martingale $Y$  is to enlarge the filtration $\mathbf{F}$ to another filtration  $\mathbf{G}$ in such a way  that $Y$ remains a semimartingale with respect to the new  and bigger filtration  $\mathbf{G}$. In this way, we can now  integrate processes that are $\mathbf{G}$-adapted, which include processes that are 
not necessarily adapted to the  underlying filtration $\mathbf{F}$. In particular, It\^o \cite{ito76} shows that if
$\mathbf{G}_1$ and $\mathbf{G}_2$ are two filtrations such that
$\mathbf{G}_1\subset\mathbf{G}_2$ and $Y$ is semimartingale with respect to both filtrations, then the stochastic integrals with respect to the $\mathbf{G}_1$ and 
$\mathbf{G}_2$ semimartingale $Y$ are the same in the intersection of the domains of both integrals.  But, this problem have not been considered in \cite{ito76} when 
$\mathbf{G}_1\not\subset\mathbf{G}_2$ and  $\mathbf{G}_2\not\subset\mathbf{G}_1$.
 This problem  has been solved by Russo and Vallois \cite{RV1993} using the forward
 integral. The forward integral is a limit in probability and agrees with the It\^o integral if the integrator is a semimartingale (see Section \ref{INVTVT} and Remark \ref{rem:prop-fI}), which answer the problem that It\^o did not address. So, the forward integral is an
anticipating integral, that is, it allows us to integrate processes that are not adapted to the underlying filtration with respect to other processes that are not necessarily semimartingales, therefore the forward integral coincides with the It\^o's integral if this last one is well-defined for the filtration $\mathbf{G}$. In consequence,
the forward integral becomes an appropriate tool to deal with problems that involve
processes that are no adapted to the underlying filtration. Now, we have another
anticipating integrals such as the divergence operator in the Malliavin calculus as it is defined in Nualart \cite{Nua06}, 
or the Stratonovich integral introduced in \cite{RV1993} (see  also Le\'on\cite{Le20}).
But these integrals do not agree with the It\^o integral when we apply the enlargement of filtrations. Examples  where  
we can apply anticipating integrals, together with the Malliavin calculus,  are the study of stability of solutions to stochastic differential equations with a random
variable as initial condition (Le\'on et al. \cite{JoJoDa23}),  optimal portfolio of an investor with extra information from the beginning (see, for instance, Biagini  and {\O}ksendal \cite{biaok},  Le\'on et al. \cite{LRNNU03} and references therein, and Pikovsky and Karatzas \cite{PiKa96}), the study of stochastic differential equations driven by fractional Brownian motion, which is not a semimartingale (see, for example, Al\`os et al. \cite{ALN01}, or
Garz\'on et al. \cite{GLT17}),
 the study of short-time behaviour of the implied volatility investigated  by  Al\`os
et al. \cite{ ALV-pub}, etc.  The last problem contains only adapted processes to the underlying
filtration but employs the future volatility as a main tool, which is a process that is 
not adapted (i.e., it is an anticipating process). 

The use of the forward integral in financial markets was first introduced by Le\'on et
al. \cite{LRNNU03} to figure an optimal portfolio out of an insider to maximize the expected
logarithmic utility from terminal wealth. An insider is an investor that
possesses extra information of the development of the market from the beginning,
which is represented by a random variable $L$. In this way, we obtain an approach based on
 the Malliavin calculus  to analyse the dynamics of  the wealth equation of this insider since
 the forward integral is related to the divergence and derivative operators, as it is
shown in Nualart \cite[equality (3.14)]{Nua06}, and in Russo and Vallois 
\cite[Remark 2.5]{RV1993}.

 It is well-know that the wealth equation is a controlled stochastic differential equation. So,
the problem of calculating an optimal portfolio to maximize the utility from terminal wealth is nothing else than a problem of   stochastic control. That is, we must  compute an optimal
control that maximize/minimize a  cost functional. A main tool in stochastic control theory
is the  verification theorem, which involves an optimal control and the so called Hamilton-Jacobi-Bellman equation (for short HJB-equation). The version of the classical verification theorem considered in this paper  is the one given in the book by Korn and Korn \cite{KoKo01}. Therefore, in this verification theorem, it is natural to consider
controls that are  adapted to a bigger filtration than the underlying one  in the HJB-equation,
as it is done in Theorem \ref{the:exverthe} below. 
Thus, the first {\color{red}goal} of this paper is to study an extension of the verification theorem that
is  based on  a classical controlled stochastic differential equation and on a classical cost function, but with controls 
adapted to the filtration generated by the underlying filtration and a random variable $L$ that stands for a certain extra information of the problem (see the filtration $\mathbf{G}$ defined in \eqref{newFil}).

Since the forward integral allows us to integrate with respect to stochastic processes that are not semimartingales, we could think that we can deal with a forward controlled stochastic
differential equation driven by a process that is a martingale with respect to the underlying filtration, but with controls 
adapted to a filtration bigger than the one generated by this martingale. However, we show 
that if we can find an optimal control in this case, then the driven process is still a semimartingale
with respect to the bigger filtration. This is the second goal in this paper.

The paper is organized as follows. In Section \ref{sec:one}, we establish the framework
that we use in the remaining of this article.  Section \ref{TVT} is devoted to state the extended verification theorem.  In Section \ref{INVTVT}, we analyse an inverse type result
for the extended verification theorem. Namely, we show that if there exists an optimal control
with respect to certain cost function and certain filtration $\tilde{\mathbf{G}}$, then
the given Brownian motion is still a $\tilde{\mathbf{G}}$-semimartingale. Finally, as an example, we provide two application of our extended verification theorem, which appear in
financial markets.

\section{Statement of the problem using initial enlargement of the filtrations}\label{sec:one}
Let  $B=\{B_t:t\in[0,T]\}$ be a Brownian motion  defined on a complete probability space $(\Omega, P, \mathcal{F})$ and $\mathbf{F}=\{\mathcal{F}_t\}_{t\in[0,T]}$ the filtration generated by $B$ augmented with the null sets. It is well-known that $\{\mathcal{F}_t\}_{t\geq 0}$ satisfies the usual conditions. We know that every $\sigma$-algebra $\mathcal{F}_t$ in $\mathbf{F}$ contains the events for which is possible determine their occurrence or not only from the history of the process $B$ until time $t$. If we assume the arrival of new information from a random variable $L$ this leads up to consider a new filtration $\mathbf{G}=\{\mathcal{G}_t\}_{t\in[0,T]}$ given by
\begin{equation}\label{newFil}
\mathcal{G}_t:=\bigcap _{s>t}\left(\mathcal{F}_s \vee \sigma(L)\right),
\end{equation}
which also satisfies the usual conditions.
Under suitable assumptions on $L$
(see, for example, Yor and Mansuy \cite[Section 1.3]{Mansuy},
Le\'on \cite[Section 3]{LRNNU03}
 or Protter \cite[Section 6]{protter}), $B$ is still a special $\mathbf{G}$-semimartingale with  decomposition 
\begin{equation}
\label{ds}
B_t=\tilde{B}_t+\int_0^t\alpha_s(L) ds,\quad t\in[0,T],
\end{equation}
where $\tilde{B}$ is a $\mathbf{G}$-Brownian motion and the information drift $\alpha=\{\alpha_s(x):s\in[0,T],\ \mbox{and}\ x\in\mathbb{R}\}$
 is an $\mathbf{F}$-adapted random field such that $\alpha(L)\in L^p(0,t)$ w.p.1, for each $t\in[0,T]$ and some $p>1$. 

In the financial framework, the initial enlargement of filtrations can be interpreted in this fashion: Consider a classical financial market with one bond and one risky asset.
Then, by Karatzas \cite{ka89}, the wealth $X$ of an honest investor follows the dynamics of the It\^o's stochastic
differential equation 
\begin{equation}\label{eq0invest}
dX_t=\left(r_tX_t +(\tilde{r}_t-r_t)u_t \right)dt+u_t\sigma_tdB_t,\quad t\in[0,T].
\end{equation}
Here, $u$ stands for the amount that the investor invest in the stock (i.e., the risky asset), and the processes $r$, $\tilde{r}$ and $\sigma$ are  $\mathbf{F}$-adapted stochastic
processes that represent the rate of the bound, the rate of the stock and the volatility
of the market, respectively.  Now suppose that this investor is an insider. That is, he/she has from the beginning some extra knowledge of the future development of the market given by the 
random variable $L$. So, this insider can  use strategies of the form
$u(L)$ to invest in the stock to make profit, where $u=\{u_s(x):s\in[0,T],\ \mbox{and}\ x\in\mathbb{R}\}$ is an $\mathbf{F}$-adapted random field (see Le\'on 
et al. \cite{LRNNU03} or Navarro \cite{navarro}, and Pikovsky and Karatzas \cite{PiKa96}). In this case, from 
(\ref{ds}) and (\ref{eq0invest}), the wealth equation of the insider is
\begin{equation}\label{eq_0}
dX_t=\left(r_tX_t +(\tilde{r}_t-r_t)u_t(L) +\sigma_t\alpha_t(L)u_t(L)\right)dt+u_t(L)\sigma_td\tilde{B}_t,\quad t\in[0,T].
\end{equation}
 Actually, equations  (\ref{eq0invest}) and (\ref{eq_0}) are equivalent (i.e., they have the same solutions). Also, in this case,  we have that equation (\ref{eq0invest}) is a controlled stochastic  differential equation driven by
the $\mathbf{G}$-semimartingale $B$ that involves controls that are
$\mathbf{G}$-adapted. Hence, to take advantage of the extra information $L$,
we can figure out a $\mathbf{G}$-adapted optimal control with respect to a certain cost function, unlike the classical  stochastic control problem, where 
the controls are $\mathbf{F}$-adapted processes. This is extended as follows.

Let $U$ and $\mathcal{O}$ be a closed and an open subsets of $\mathbb{R}$, respectively. For $t_0\in [0,T)$, we will denote $Q=(t_0,T)\times \mathcal{O}$ and $\bar{Q}=[t_0,T]\times \bar{\mathcal{O}}$. Throughout this work, we will assume that the extra information is modeled by  a random variable $L$. 
Now, consider two measurable functions $b, \sigma :\bar{Q}\times U\to \mathbb{R}$ 
satisfying suitable conditions that are given in Section \ref{INVTVT} and
the controlled stochastic differential equation for the filtration $\mathbf{G}$ 
\begin{equation}\label{eq:gen}
dY_t =b(t,Y_t,u_t)dt+\sigma(t,Y_t,u_t)dB_t,\quad  t \in(0,T].
\end{equation}
Here, $u:[0,T]\times\Omega\rightarrow U$ has the form $u_s=u_s(L)$ as in equation (\ref{eq_0}). In consequence, under assumption (\ref{ds}), this last equation is also written as
\begin{equation}\label{eq:genequiv}
dY_t =\left(b(t,Y_t,u_t)+\sigma(t,Y_t,u_t)\alpha_t(L)\right)dt+\sigma(t,Y_t,u_t)d\tilde{B}_t,\quad t \in[0,T].\end{equation}
 That is, the solution $Y:\Omega\times [0,T]\to \mathcal{O}$ to these two equations
is an It\^o process adapted to the filtration $\mathbf{G}$. Therefore, $Y$ 
would  be only controlled whenever it remains in the open set $\mathcal{O}$. Thus, it is necessary to introduce the $\mathbf{G}$-stopping time
\begin{equation}\label{eq:G-stpptime}
\tau:=\inf\{s\in [t_0,T]\mid (s,Y_s) \notin Q\}.
\end{equation}
Remember that $\tau$ is a stopping time since the filtration $\mathbf{G}$ satisfies the usual conditions, as it is established in Protter \cite{protter}. Moreover, by definition, $\tau\le T$.

The main task in stochastic control consists in determining a control $u^*$ which is optimal with respect to a certain cost function. In this paper, the cost function has the form
\begin{equation}\label{fun_cost}
\mathcal{J}\left(t,x;u\right):=\mathbb{E}_{t,x}\left(\int_t^{\tau}L(s,Y_s,u_s)ds+\psi(\tau,Y_{\tau})\right),
\end{equation}
where the deterministic functions $L:Q\times U\to \mathbb{R}$ and $\psi: \bar{Q}\to \mathbb{R}$ are the initial and final cost functions, respectively. Furthermore,  the expectation $E_{t,x}$ indicates that the solution $Y$ to the controlled equation (\ref{eq:gen}) has initial condition $x$ at time $t$.
The classical tool to solve this optimization problem  is the so called Hamilton-Jacobi-Bellman equation (HJB-equation), which is related with the value function (see (\ref{Prob}) below), through the verification theorem. Consequently, in this paper we are interested in establishing an 
extension of the verification theorem that allows us to
deal with  controls  adapted to a bigger filtration than the underlying filtration, for which the Brownian motion $B$ is a semimartingale. This is done in Section
\ref{TVT}.  Conversely,  in Section \ref{INVTVT}, we show that if we can find an 
$\mathbf{G}$-adapted optimal control (with respect to a certain cost function), where
 the filtration  $\mathbf{G}$ is bigger than the one generated by $B$, then 
$B$ is an $\mathbf{G}$-semimartingale. Finally, we provide two examples where we apply our extended verification theorem in Section \ref{sec:exa}.


\section{The statement of verification theorem under enlargement of filtration}\label{TVT}

The goal of this section is to state a  verification theorem for the initial enlargement of filtrations. We first introduce the general assumptions and notation that we use throughout this section.

 $(\Omega, P, \mathcal{F})$ is a complete probability space where it is defined a Brownian motion $B=\{B_t:t\in[0,T]\}$ and $L:\Omega\rightarrow\mathbb{R}$ is a random variable such that there are a $\mathbf{G}$-Brownian motion
$\tilde{B}=\{\tilde{B}_t:t\in[0,T]\}$ and an $\mathbf{F}$-random field 
$\alpha=\{\alpha_s(x):s\in[0,T],\ \mbox{and}\ x\in\mathbb{R}\}$ satisfying
equality (\ref{ds}), for all $t\in[0,T]$, w.p.1. Here, $\mathbf{F}$ is  defined in Section \ref{sec:one} and $\mathbf{G}$ is the filtration introduced in (\ref{newFil}). In this paper, we do not necessarily have that $\mathcal{F}$ is the $\sigma$-algebra
$\mathcal{F}_T$. That is, we could have $\mathcal{F}_T\subset \mathcal{F}$. 

In this section, we deal with equation (\ref{eq:gen}). That is, the controlled stochastic
differential equation 
$$dX_t =b(t,X_t,u_t)dt+\sigma(t,X_t,u_t)dB_t.\quad t\in(t_0,T].$$
Here, $t_0\in[0,T)$, the coefficients $b, \sigma $ and the control $u$ satisfy the following hypothesis and definition, respectively. Remember that $Q$ and $U$ were introduced in Section
\ref{sec:one}.
\begin{itemize} 
\item[($\mathbf{H}$)] The coefficients $b, \sigma :Q\times U\to \mathbb{R}$ 
are measurable and satisfy the following conditions:
\begin{itemize}
\item[i)] $b(t,\cdot,u), \sigma(t,\cdot,u)\in C^{1}(\mathcal{O})$, for all 
$(t,u)\in (t_0,T)\times U$.
\item[ii)] There exists a constant $C>0$ such that, for all $(t,x,u)\in Q\times U$,
$$ |\partial_x b|\leq C, \quad |\partial_x \sigma|\leq C,
 \quad\mbox{and}\quad |b(t,x,u)|+|\sigma(t,x,u)|\leq C(1+|x|+|u|).$$
\end{itemize}\end{itemize}

Observe that equation (\ref{eq:gen}) can only have a solution $X$ up to the first time it exploits and consequently, it will be only controlled as long as it remains in the set $\mathcal{O}$. In this case, it means that equation (\ref{eq:gen})  has a solution $t\mapsto X_t$ up to either it reaches the boundary $\partial\mathcal{O}$ of the set $\mathcal{O}$, 
or $t=T$.

Now, we are ready to defined the admissible strategies.
\begin{defi}\label{def admcontrol}
Let $t_0\in[0,T)$.
A  $\mathbf{G}$-progressively measurable process $u:[t_0,T]\times\Omega\rightarrow U$
is called an admissible control for equation (\ref{eq:gen}) if
\begin{equation}\label{ad-pro}
\mathbb{E}\left(\int_t^{T}|u_s|^kds\right)<\infty,\quad \text{ for all } k\in\mathbb{N}.
\end{equation}
and, for $x\in\mathcal{O}$,  equation (\ref{eq:gen}) has a solution 
$X$ such that $X_{t_0}=x$. Moreover, we set $\mathcal{A}(t_0,x)$ as the family of
admissible controls defined on $[t_0,T]\times\Omega$.
\end{defi}

Note that if $u\in\mathcal{A}(t_0,x)$ and $x\in\mathcal{O}$, then equation (\ref{eq:gen})
has a unique solution such that $X_{t_0}=x$ because of the definition of admissible control and Hypothesis ($\mathbf{H}$).ii), which implies that the coefficients $b$ and $\sigma$ are
 Lipschitz on any interval contained in $\mathcal{O}$, uniformly on $[0,T]\times U$.

The main task in stochastic control consists in determining a control $u^*$ which is optimal with respect to a certain cost functional. For our purposes, the cost functional  has the form as in equality \eqref{fun_cost} where the deterministic functions $L:Q\times U\to \mathbb{R}$ and $\psi: \bar{Q}\to \mathbb{R}$  verify 
\begin{equation}\label{L}
|L(t,x,u)|\leq C(1+|x|^k+|u|^k)\quad\mbox{and}\and\quad |\psi(t,x)|\leq C(1+|x|^k),
\end{equation}
for some $k\in \mathbb{N}$. Remember that 
the notation $\mathbb{E}_{t,x}$ corresponds to the expectation of functionals of the 
solution  $X$ to equation \eqref{eq:gen} with an initial condition $x$ at time $t$.

Before stating the control problem for this work, we need to introduce 
some   extra definitions and conventions. 

The control problem that we consider here is to compute $u^*\in \mathcal{A}(t,x)$, which  minimizes the cost functional \eqref{fun_cost}. That is, a control $u^*$ in
$ \mathcal{A}(t,x)$ satisfying
\begin{equation}\label{Prob}
V(t,x):=\inf_{u\in\mathcal{A}(t,x)} \mathcal{J}(t,x;u)= \mathcal{J}(t,x;u^{*}).
\end{equation}
Note that the function $V:[0,T]\times\mathcal{O}\rightarrow\mathbb{R}$ describes
the evolution of the minimal costs as a function of $(t,x)$. This function is called 
the value function.

In analogy with the adapted case, where $\alpha\equiv0$, we use the convention
\begin{eqnarray}
A^{u}G(t,x):&=&\partial_t G(t,x)+\frac{1}{2}\sigma^2(t,x,u)\partial_{xx}G(t,x)\nonumber\\
&&+\left(b(t,x,u)+\alpha_t(L)\sigma(t,x,u)\right)\partial_x G(t,x),\label{eq:Au}
\end{eqnarray}
for $G\in C^{1,2}(Q)\cap C(\bar{Q})$ and  $(t,x,u)\in Q\times U$.
We observe that we need to deal with the extra term $(t,x,u)\mapsto
\alpha_t(L)\sigma(t,x,u)\partial_x G(t,x)$ since equation \eqref{eq:gen} is equivalent to
equation \eqref{eq:genequiv} due to condition \eqref{ds}. Remember that equation 
\eqref{eq:genequiv} is a controlled stochastic differential equation driven by the 
$\mathbf{G}$-Brownian motion $\tilde{B}$. So, in the remaining of this section, we assume that $\alpha(L)$ defined in \eqref{ds} belongs to $L^p([0,T]\times\Omega)$,
for some $p>1$.

Now we are in position to enunciate the main result of this section, where 
we use the $\mathbf{G}$-stopping time $\tau$ given in \eqref{eq:G-stpptime}. Note that
$\tau\equiv T$ in the case that $\mathcal{O}=\mathbb{R}$.
\begin{theorem}\label{the:exverthe}
Let Hypothesis ($\mathbf{H})$ be satisfied and let $G:Q\times \Omega \to \mathbb{R}$  be a $\mathbf{G}$-adapted random field and $\Omega_0\subset \Omega$ a set of probability $1$ such that, for all $\omega\in \Omega_0$,
$$G\in C^{1,2}(Q)\cap C(\bar{Q}),\quad
|G(t,x)|\leq K(1+|x|^m),\quad\mbox{and}\quad
|G_x(t,x)|\leq J(1+|x|^n),
$$
for some random variables $K\in L^2(\Omega)$ and $J\in L^4(\Omega)$, and  $m,n \in \mathbb{N}$. In addition, assume that $G$ is a solution of the Hamilton-Jacobi-Bellman equation
\begin{equation}\label{jhbe}
\begin{cases}
\inf_{u\in U}\left\{A^{u}G(t,x)+L(t,x,u)\right\}=0, &(t,x)\in Q, \\
\mathbb{E}_{t,x}\left(G(\tau,X_{\tau})\right)= \mathbb{E}_{t,x}\left(\psi(\tau,X_{\tau})\right),&(t,x)\in Q,
\end{cases}
\end{equation}
where $X$ is the solution of either equation \eqref{eq:gen} or equation
\eqref{eq:genequiv}. Then, if
\begin{equation}\label{norm}
\mathbb{E}_{t,x}\left(\|X\|^{\beta}\right):=\mathbb{E}_{t,x}\left(\sup_{s\in[t,\tau]}|X_s|^{\beta}\right)<\infty,\quad \mbox{for}\ \ (t,x)\in Q,
\end{equation}
with $\beta=\max(2m,k)$, where $k$ is  the exponent in \eqref{L}, we have that
\begin{description}
  \item[a)] $\mathbb{E}_{t,x}(G(t,x)) \leq \mathcal{J}(t,x,u)$, for all $(t,x)\in Q$ and $u\in \mathcal{A}(t,x)$.
  \item[b)] If for all $(t,x)\in Q$, there exits a control $u^*\in \mathcal{A}(t,x)$ such that 
\begin{equation}\label{argmin}
u^*_s\in \text{arg}\min_{u\in U}\left(A^uG(s,X_s^*)+L(s,X_s^*,u)\right),
\end{equation}
\noindent
for all $s\in[t,\tau]$, where $X^*_{s}$ is the controlled process with $X_t^*=x$
corresponding to $u^*$ via \eqref{eq:gen}, then
\begin{equation*}
\mathbb{E}_{t,x}(G(t,x))=\mathcal{J}(t,x;u^{*})=V(t,x).
\end{equation*}
\end{description}
In particular $u^{*}$ is an optimal control and $(t,x)\mapsto \mathbb{E}_{t,x}\left(G(t,x)\right)$  coincides with the value function. 
\end{theorem}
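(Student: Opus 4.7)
The plan is to mimic the classical verification-theorem argument (as in Korn and Korn) but applied to the $\mathbf{G}$-semimartingale form \eqref{eq:genequiv} of the state equation, using an It\^o formula for $\mathbf{G}$-adapted random fields to accommodate the fact that $G$ itself depends on $\omega$ through the extra information $L$. Fix $(t,x)\in Q$ and an admissible control $u\in\mathcal{A}(t,x)$ with associated trajectory $X$ satisfying $X_t=x$. On $\Omega_0$, applying It\^o's formula to $s\mapsto G(s,X_s)$ on $[t,\tau]$ and using \eqref{eq:genequiv}, the $\alpha_s(L)\sigma$-drift combines with the $b$-drift and the second-order term to reproduce the operator $A^u$ of \eqref{eq:Au}, yielding
\begin{equation*}
G(\tau,X_\tau)-G(t,x)=\int_t^\tau A^u G(s,X_s)\,ds+\int_t^\tau \sigma(s,X_s,u_s)G_x(s,X_s)\,d\tilde{B}_s.
\end{equation*}

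For part (a), I would invoke the HJB inequality $A^u G(s,X_s)+L(s,X_s,u_s)\geq 0$ to rewrite the display as
\begin{equation*}
G(t,x)\leq \int_t^\tau L(s,X_s,u_s)\,ds+G(\tau,X_\tau)-\int_t^\tau \sigma(s,X_s,u_s)G_x(s,X_s)\,d\tilde{B}_s,
\end{equation*}
then take $\mathbb{E}_{t,x}$. The boundary condition in \eqref{jhbe} replaces $\mathbb{E}_{t,x}(G(\tau,X_\tau))$ by $\mathbb{E}_{t,x}(\psi(\tau,X_\tau))$, and the $\tilde{B}$-integral has zero mean provided the integrand lies in $L^2([t,\tau]\times\Omega)$. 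That integrability reduces, after pulling the random variable $J^2$ out of the time integral and using $|\sigma|\le C(1+|X|+|u|)$, to bounding $\mathbb{E}\bigl[J^2\int_t^\tau (1+|X_s|^n)^2(1+|X_s|+|u_s|)^2\,ds\bigr]$, which closes by Cauchy--Schwarz against $J\in L^4$ together with the moment bound \eqref{norm} and the arbitrary-order integrability \eqref{ad-pro} of $u$, combined with standard moment estimates for $X$ deduced from the Lipschitz property in $(\mathbf{H})$.ii). This gives $\mathbb{E}_{t,x}(G(t,x))\leq\mathcal{J}(t,x;u)$. For part (b), the same computation applied to $(u^*,X^*)$ turns the HJB inequality into an equality through \eqref{argmin}, so the expectation argument yields $\mathbb{E}_{t,x}(G(t,x))=\mathcal{J}(t,x;u^*)$; combined with part (a) this forces $\mathcal{J}(t,x;u^*)=V(t,x)$ and identifies $\mathbb{E}_{t,x}(G(t,x))$ with the value function.

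The genuinely delicate points I anticipate are two. First, It\^o's formula has to be applied to $G(s,X_s)$ with $G$ only $\mathbf{G}$-adapted in $\omega$ rather than deterministic, so the textbook version does not apply directly; I would rely on the It\^o formula for adapted random fields flagged in the keywords (either established elsewhere in the paper or imported from the enlargement-of-filtrations literature), which requires $\mathbf{G}$-adaptedness of $G$ together with the stated $C^{1,2}$-regularity. Second, verifying the martingale property of the $\tilde{B}$-integral is the most fragile step: if a direct Cauchy--Schwarz estimate does not match the exponent $\beta=\max(2m,k)$ cleanly, I would localise by $\tau_N:=\inf\{s\in[t,\tau]:|X_s|\geq N\}$, apply the zero-mean property to each truncated integral, and pass $N\to\infty$ using dominated convergence supported by the same moment bounds together with the boundary condition, noting that $G(\tau_N,X_{\tau_N})\to G(\tau,X_\tau)$ is dominated by $K(1+\|X\|^m)\in L^1$ via the $L^2$-bound on $K$ and \eqref{norm}.
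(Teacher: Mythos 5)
Your proposal follows essentially the same route as the paper: rewrite the dynamics in the $\tilde{B}$-form \eqref{eq:genequiv}, apply an It\^o formula for $\mathbf{G}$-adapted random fields (the paper invokes Kunita's version) to $s\mapsto G(s,X_s)$, use the first line of \eqref{jhbe} as a pointwise inequality, kill the stochastic integral in expectation, and for (b) turn the inequality into an equality along $u^*$. Two caveats are worth recording. First, your ``first-choice'' global Cauchy--Schwarz estimate for the zero-mean property cannot close: after pulling $J^2$ out with $J\in L^4$, the remaining integrand involves $|X_s|$ to a power of order $4n+4$, while \eqref{norm} only provides moments of order $\beta=\max(2m,k)$, which bears no relation to $n$; nor can you manufacture higher moments of $X$ ``from the Lipschitz property'', since the drift of \eqref{eq:genequiv} contains $\alpha(L)$, which is only assumed to lie in $L^p$ for some $p>1$. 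So localization is not an optional fallback but the only viable path, and the paper builds it in from the start. Second, and more substantively, your truncation $\tau_N=\inf\{s:|X_s|\ge N\}$ controls only the size of $X$; it does not keep $(s,X_s)$ away from $\partial\mathcal{O}$ or from the terminal time $T$, whereas $G$ is only $C^{1,2}$ on the open set $Q$ (merely continuous on $\bar{Q}$), so $\partial_xG$ and $\partial_{xx}G$ may degenerate as $(s,X_s)$ approaches $\partial Q$ and It\^o's formula ``on $[t,\tau]$'' is not directly licensed. The paper's exhaustion $Q_p=[t,T-\tfrac1p)\times\mathcal{O}_p$, with $\mathcal{O}_p$ bounded and compactly contained in $\mathcal{O}$ and $\tau_p\uparrow\tau$, repairs both defects at once; the passage to the limit then uses only the growth bounds on $G$ and $L$, $K\in L^2$, \eqref{norm} and \eqref{ad-pro}, exactly as you indicate for the dominated-convergence step. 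Finally, for (b) the paper argues by contradiction (assuming strict inequality and contradicting \eqref{argmin} via the HJB equation), which is logically equivalent to, and no cleaner than, your direct equality argument.
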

\begin{proof}

Let $(t,x)\in Q$ and $\omega_0\in\Omega_0$. Also, let $\tau$ be the 
$\mathbf{G}$-stopping time introduced in \eqref{eq:G-stpptime}.

We first assume that the open set $\mathcal{O}$ is  bounded. Then, using that $G$ 
is a solution of the HJB-equation \eqref{jhbe}, we have that, for $u\in\mathcal{A}(t,x)$
and $s\in [t,\tau)$,

\begin{equation}\label{cota_A}
0\leq A^{u_s}G(s,X_s)+L(s,X_s,u_s).
\end{equation}
On the other hand, consider a  $\mathbf{G}$-stopping time $\theta$,
such that $t\le \theta\le\tau$. Hence, by
 \eqref{eq:Au}, It\^o's formula (see \cite[Theorem 8.1, pp. 184]{kunita}) applied to $G(\theta, X_{\theta})$ and taking expectation, we obtain
\begin{align}\notag
\mathbb{E}_{t,x}&\left(G(\theta, X_{\theta})\right)\\\notag
=&\mathbb{E}_{t,x}\left(G(t,x)+\int_t^{\theta}\partial_s G(s,X_s)ds+\int_t^{\theta}\partial_x G(s,X_s)\left[b(s,X_s,u_s)+\sigma(s,X_s,u_s)\alpha_s(L)\right]ds\right.\\\notag
&+\left.\frac{1}{2}\int_t^{\theta}\partial _{xx}G(s,X_s)\sigma^2(s,X_s,u_s)ds+\int_t^{\theta}\partial_xG(s,X_s)\sigma(s,X_s,u_s)d\tilde{B}_s\right)\\\label{eq:ito_0}
=&\mathbb{E}_{t,x}\left(G(t,x)+\int_t^{\theta} A^{u_s}G(s,X_s)ds\right)+\mathbb{E}_{t,x}\left(\int_t^{\theta}\partial_x G(s,X_s)\sigma(s,X_s,u_s)d\tilde{B}_s\right).
\end{align}
Now, we claim that the expectation of the stochastic integral in equality \eqref{eq:ito_0} is equal to zero. Indeed, since $\sigma$ satisfies Hypothesis  ($\mathbf{H}$), and
using the assumption on $G_x$, we can write
\begin{align}\notag
&\mathbb{E}_{t,x}\left(\int_t^{\theta}|\partial_x G(s,X_s)\sigma(s, X_s,u_s)|^2ds\right)\leq C^2\mathbb{E}_{t,x}\left(\int_{t}^{\theta}J^2(1+|X_s|^n)^2(1+|X_s|+|u_s|)^2 ds\right)\\\notag
&= C^2\mathbb{E}_{t,x}\left(\int_t^{\theta}J^2(1+|X_s|^n)^2(1+|X_s|)^2ds\right)+2C^2\mathbb{E}_{t,x}\left(\int_t^{\theta}J^2(1+|X_s|^n)^2(1+|X_s|)|u_s|ds\right)\\\notag
&\quad+C^2\mathbb{E}_{t,x}\left(\int_t^{\theta}J^2(1+|X_s|^n)^2|u_s|^2ds\right).
\end{align}

Therefore, the fact that $\mathcal{O}$ is bounded yields that there is a constant $\tilde{C}
>0$ such that
\begin{eqnarray*}
\mathbb{E}_{t,x}\left(\int_t^{\theta}|\partial_x G(s,X_s)\sigma(s,X_s,u_s)|^2ds\right)&\le&\tilde{C}\mathbb{E}_{t,x}\left(J^2\right)+\tilde{C}\mathbb{E}_{t,x}\left(J^2
\int_t^{\theta}|u_s|ds\right)\\
&&+\tilde{C}\mathbb{E}_{t,x}\left(\int_t^{\theta}J^2|u_s|^2ds\right).
\end{eqnarray*}
Thus,  our claim is satisfied since $J\in L^4(\Omega)$ and 
condition \eqref{ad-pro}. That is,
 $$\mathbb{E}_{t,x}\left(\int_t^{\theta}|\partial_x G(s,X_s)\sigma(s.X_s,u_s)|^2ds\right)<\infty,$$ 
which implies that $$\mathbb{E}_{t,x}\left(\int_t^{\theta}\partial_x G(s,X_s)\sigma(s,X_s,u_s)d\tilde{B}_s\right)=0$$
because $\tilde{B}$ is a $\mathbf{G}$-Brownian motion.
Then, equality \eqref{eq:ito_0} becomes the inequality 
\begin{eqnarray}
\mathbb{E}_{t,x}\left(G(t,x)\right)&=&\mathbb{E}_{t,x}\left(G(\theta,X_{\theta})-\int_t^{\theta}A^{u_s}G(s,X_s)ds\right)\nonumber\\
&\leq& \mathbb{E}_{t,x}\left(G(\theta, X_{\theta})+\int_t^{\theta}L(s,X_s,u_s)ds\right),
\label{eq:intverthepaso1}
\end{eqnarray}
where to obtain the last inequality we have used \eqref{cota_A}. In particular, with $\theta=\tau$, we obtain the assertion in \textbf{a)}.

Now consider a general open set $\mathcal{O}\subset \mathbb{R}$ and see that
\eqref{eq:intverthepaso1} is also satisfied in this case. To do so, choose $N\in \mathbb{N}$ 
 such that $\frac{1}{N}<T-t$.  For $p\in\mathbb{N}$ such that $p>N$,
set
$$
\mathcal{O}_p:=\mathcal{O}\cap\left\{x\in \mathbb{R}\mid |x|<p,\; \text{dist}(x,\partial\mathcal{O})>\frac{1}{p}\right\},$$
with
$$Q_{p}:=\left[t,T-\frac{1}{p}\right)\times \mathcal{O}_p.$$
Let $\tau_p=\inf\{s\in[t,T-\frac1p)\mid (s,X_s)\notin Q_p\}$. Then,  \eqref{eq:intverthepaso1} implies
$$
\mathbb{E}_{t,x}\left(G(t,x)\right)\leq \mathbb{E}_{t,x}\left(\int_t^{\tau_p}L(s,X_s,u_s)ds+G(\tau_p,X_{\tau_p})\right),
$$
for all $(t,x)\in Q_p$ and $u\in\mathcal{A}(t,x)$. Consequently, the dominated convergence theorem,  $\tau_p\uparrow\tau$, \eqref{ad-pro},
\eqref{L},  \eqref{norm}, and the facts that $G$ is continuous in $\bar{Q}$ and
$G(t,x)\le K(1+|x|^m)$  lead to 
$$\mathbb{E}_{t,x}\left(G(t,x)\right)< \mathbb{E}_{t,x}\left(\int_t^{\tau}L(s,X_s,u_s)ds+\psi(\tau,X_{\tau})\right).$$

To finish the proof,  we now assume that, for all $(t,x)\in Q$ and $u\in\mathcal{A}(t,x)$, the following strict inequality is satisfied
$$
\mathbb{E}_{t,x}\left(G(t,x)\right)< \mathbb{E}_{t,x}\left(\int_t^{\tau}L(s,X_s,u_s)ds+\psi(\tau,X_{\tau})\right),$$
which gives
\begin{align}\label{inq:aux_3}
0< \mathbb{E}_{t,x}\left(\int_t^{\tau}(L(s,X_s,u_s)+A^{u_s}G(s,X_{s}))ds\right).
\end{align}
Indeed, from \eqref{eq:ito_0}, where we change $\theta$ by $\tau_p$,
we obtain
\begin{eqnarray*}
\mathbb{E}_{t,x}\left(G(\tau_p,X_{\tau_p})-G(t,x)\right)&=&
 \mathbb{E}_{t,x}\left(\int_t^{\tau_p}A^{u_s}G(s,X_{s})ds\right)\\
&=&\mathbb{E}_{t,x}\left(\int_t^{\tau_p}(L(s,X_s,u_s)+A^{u_s}G(s,X_{s}))ds\right.\\
&&\left.+\int_t^{\tau_p}L(s,X_s,u_s)ds\right).
\end{eqnarray*}
Thus, inequality \eqref{cota_A} and the dominated and monotone convergence theorems allow us to show that \eqref{inq:aux_3} holds.
In particular, inequality \eqref{inq:aux_3} is true for the control $u^*$ that satisfies \eqref{argmin}, namely
$$
0< \mathbb{E}_{t,x}\left(\int_t^{\tau}(L(s,X^*_s;u^*_s)+A^{u^*_s}G(s,X^*_{s}))ds\right), 
$$
which yields a contradiction since $G$ is a solution of equation \eqref{jhbe}, thus 
\begin{align*}
 \mathbb{E}_{t,x}\left(G(t,x)\right)=V(t,x)=\mathcal{J}(t,x;u^*)
\end{align*}
and the proof of case \textbf{b)} is complete. 
\end{proof}

\section{A converse-type result for the verification theorem}\label{INVTVT}

The purpose of this section is to give a converse type result of the verification theorem proved in Section \ref{TVT}. Towards this end, the main tool in this section is the forward integral with respect to the Brownian motion $B$. Remember that $\mathbf{F}$ stands for the filtration 
generated by $B$ augmented with the $P$-null sets.
\begin{defi}[Forward integral]\label{FI}
Let  $v:[0,T]\times\Omega\rightarrow\mathbb{R}$ be a $\mathcal{B}([0,T])\otimes \mathcal{F}$-measurable process with integrable trajectories. We say that $v$ is forward integrable with respect to $B$ ($v\in  \text{Dom } \delta^{-}$ for short) if
\begin{equation*}
\frac{1}{\epsilon}\int_0^{T}v_s(B_{(s+\epsilon)\wedge T}-B_s)ds,
\end{equation*}
converges in probability as $\epsilon \downarrow 0$. We denote this limit by $\int_0^Tv_sd^{-}B_s$.
\end{defi}
\begin{rmk} \label{rem:prop-fI}The Forward integral has the following two properties:
\begin{itemize}
\item[i)] Assume that $v=\{v_t:t\in[0,T]\}$ is a bounded  
$\mathcal{B}([0,T])\otimes\mathcal{F}_T$-measurable  and $\mathbf{F}$-adapted
process. Then, Russo and Vallois \cite[Proposition 1.1]{RV1993} have shown that $v\in  \text{Dom } \delta^{-}$ and
$$\int_0^Tv_sd^{-}B_s=\int_0^Tv_sdB_s,$$
where the stochastic integral in the right-hand side is in the It\^o sense.
\item[ii)] Assume that $B$ is a $\tilde{\mathbf{G}}$-semimartingale, where 
$\tilde{\mathbf{G}}$ is a bigger filtration than  $\mathbf{F}$. Let
$X$  be a  $\tilde{\mathbf{G}}$-adapted process that is integrable with respect to
the $\tilde{\mathbf{G}}$-semimartingale $B$, then $X\in  \text{Dom } \delta^{-}$ and
$$\int_0^TX_sd^{-}B_s=\int_0^TX_sdB_s,$$
where the right-hand side is the It\^o integral with respect to the $\tilde{\mathbf{G}}$-semimartingale $B$. This is also proven in   Proposition 1.1 of \cite{RV1993}.
\item[iii)] Let $v\in  \text{Dom } \delta^{-}$ and $\theta$ a random variable. Then, it is 
easy to see that  $\theta v\in  \text{Dom } \delta^{-}$ and
$$\int_0^T(\theta v_s)d^{-}B_s=\theta\int_0^Tv_sd^{-}B_s.$$
\end{itemize}
\end{rmk} 

Using Definition \ref{FI} the involve stochastic equation for the wealth process of an investor is the following controlled stochastic process (see equation \eqref{eq0invest})
\begin{equation}\label{main:eq}
X_t=x+\int_0^t \left[r_sX_s +(\tilde{r}_s-r_s)u_s\right] ds+ \int_0^t u_s\sigma_s d^{-}B_s,
\quad t\in[0,T].
\end{equation}
Here, the coefficients satisfy the following condition:

\begin{assumption}\label{A2}
 $r,\tilde{r},\sigma:[0,T]\times\Omega\rightarrow\mathbb{R}$ are 
 $\mathcal{B}([0,T])\otimes\mathcal{F}_T$-measurable and $\mathbf{F}$-adapted processes such that
\begin{enumerate}
  \item  $r$ is a bounded process such that  $(r-\tilde{r})\in L^2([0,T])$ with probability 1.
  \item $\sigma>0$ is a bounded process.
  \end{enumerate}
\end{assumption}

Throughout this section, we assume that we have a filtration $\tilde{\mathbf{G}}$ bigger
than $\mathbf{F}$. The family of admissible controls are related to this filtration. That is, in this section, the family $\mathcal{A}(t,x)$ of admissible controls is the set of 
$\tilde{\mathbf{G}}$-progressively measurable
processes $u\in L^2([0,T]\times\Omega)$, for which \eqref{main:eq} has a unique solution with $X_t=x$, for all $x\in
\mathbb{R}$. Note that, in particular, we have  $u\sigma\in\text{Dom }\delta^{-}$. We
also observe that if the filtration $\tilde{\mathbf{G}}$ agrees with the filtration 
$\mathbf{G}$ introduced in \eqref{newFil} and $B$ is the $\mathbf{G}$-semimartingale 
given in \eqref{ds}, then equation \eqref{main:eq} is nothing else than equation \eqref{eq_0} due to Remark \ref{rem:prop-fI}.ii). This is the reason why the forward integral was used for the first time in \cite{navarro} to solve problems related  to financial markets.

Remember that we are interested in the optimal control problem defined on \eqref{Prob}, where we consider the cost functional given by 
\begin{equation}\label{functional_part}
\mathcal{J}(x,t;u):=\mathbb{E}_{t,x}\left(\int_t^T au_s^2ds-\exp\left(-\int_0^Tr_sds\right)X_T(u)\right), \text{ for }a,b>0.
\end{equation}

In other words, we take the classic quadratic running cost function $L(t,x,u)=au^2$ and the final cost is $\psi(t,x)=-e^{\int_0^t-r_sds}x$, which can be interpreted as the present value of quantity $x$. 

The objective is to prove that if there exists an admissible optimal control $u^*
\in\mathcal{A}(t,x)$ for the problem given in \eqref{Prob} via the cost functional \eqref{functional_part}, thus we can conclude that the $\mathbf{F}-$Brownian motion $B$ is a semimartingale in the bigger filtration $\tilde{\mathbf{G}}$. For achieving this result we will use the following hypothesis which is inspired in \cite[Theorem 3.5]{biaok}.
\begin{assumption}\label{A1} 
\begin{enumerate}
\item For all $t\in[0,T)$ and $u\in\mathcal{A}(t,x)$, the process $(s,\omega)\mapsto u_s+\chi_{(t,t+h]}(s)\theta_0(\omega)$ belongs to $\mathcal{A}(t,x)$, where $\theta_0$ is a bounded 
$\tilde{\mathbf{G}}_t$-measurable random variable and $h>0$ is such that $t+h\leq T$.
 \item There is a constant $m$ such that $0<m\le |\sigma|$  with probability 1.
\end{enumerate}
\end{assumption}

Concerning point 1 of Hypothesis \ref{A1}, we observe the following. Consider the $\mathbf{F}$-adapted random field
\begin{eqnarray}\label{auxrandfiesinsf}
X_{\tilde{t}}(y)&=&\exp\left(\int_0^{\tilde{t}}r_sds\right)x+\exp\left(\int_0^{\tilde{t}}r_sds\right)y\int_0^{\tilde{t}}\exp\left(-\int_0^sr_\eta d\eta\right)
(\tilde{r}_s-r_s)\chi_{(t,t+h]}(s)ds\nonumber\\
&&+\exp\left(\int_0^{\tilde{t}}r_sds\right)y\int_0^{\tilde{t}}
\exp\left(-\int_0^{s}r_sds\right)\sigma_s\chi_{(t,t+h]}(s)dB_s,
\end{eqnarray}
for $\tilde{t}\in[0,T]$ and 
$y\in\mathbb{R}$. The classical It\^o formula implies that $X(y)$ is a solution to the $\mathbf{F}$-adapted
stochastic differential equation
$$
X_{\tilde{t}}(y)=x+\int_0^{\tilde{t}} r_sX_s(y) ds+y\int_0^{\tilde{t}}(\tilde{r}_s-r_s) 
\chi_{(t,t+h]}(s)ds+ y\int_0^{\tilde{t}} \sigma_s\chi_{(t,t+h]}(s) dB_s,
\quad \tilde{t}\in[0,T].$$
Therefore, Remarks \ref{rem:prop-fI}.i) and \ref{rem:prop-fI}.iii) yield
that, for a $\tilde{\mathbf{G}}$-random variable $\theta$, the $\tilde{\mathbf{G}}$-adapted process $X(\theta)$ is a solution to equation
\eqref{main:eq} with $u=\theta\chi_{(t,t+h]}$. Moreover, proceeding as in Le\'on et. al. \cite{LRNNU03}, we can show that, in this case, equation \eqref{main:eq} has a unique solution
of the form $\pi(\theta)$, where $\phi=\{\pi_s(y):(s,y)\in[0,T]\times\mathbb{R}\}$ is
an $\mathbf{F}$-adapted random field satisfying suitable conditions. We observe that we 
suppose that, in Hypothesis \ref{A1}.1, the process $(s,\omega)\mapsto\chi_{(t,t+h]}(s)\theta_0(\omega)$ is an admissible control because we do not know the form of all  the solutions to equation \eqref{main:eq}. In other words, we are assuming the uniqueness
 of the
solution to \eqref{main:eq} for controls of the form $(s,\omega)\mapsto\chi_{(t,t+h]}(s)\theta_0(\omega)$.

Now, we can prove the main result of this section.
\begin{theorem}\label{the:mainext}
Suppose that Hypotheses \ref{A2}  and \ref{A1} are satisfied and that there exists an optimal control $u^{*}\in \mathcal{A}(t,x)$ for the problem defined in \eqref{Prob} with the functional \eqref{functional_part}. Then, the $\mathbf{F}$-Brownian motion
$B$ is a  $\tilde{\mathbf{G}}$-semimartingale.
\end{theorem}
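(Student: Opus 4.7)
The plan is to run a first-order (Gateaux) variational argument around $u^{*}$. Fix $t\in[0,T)$, $h>0$ with $t+h\le T$, and a bounded $\tilde{\mathbf{G}}_t$-measurable random variable $\theta_0$. By Hypothesis~\ref{A1}.1, the perturbed control $u^{\varepsilon}:=u^{*}+\varepsilon\chi_{(t,t+h]}\theta_0$ lies in $\mathcal{A}(t,x)$ for every $\varepsilon\in\mathbb{R}$ (applied to the bounded random variable $\varepsilon\theta_0$). Because equation~\eqref{main:eq} is affine in the pair $(X,u)$, the deviation $X_T(u^{\varepsilon})-X_T(u^{*})$ satisfies the same linear equation with null initial datum and control $\varepsilon\chi_{(t,t+h]}\theta_0$. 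Formula~\eqref{auxrandfiesinsf} together with Remark~\ref{rem:prop-fI}.iii) then give the closed form $X_T(u^{\varepsilon})-X_T(u^{*})=\varepsilon\theta_0\,e^{\int_0^T r_s ds}\bigl(\int_t^{t+h}e^{-\int_0^s r_\eta d\eta}(\tilde{r}_s-r_s)\,ds+\int_t^{t+h}e^{-\int_0^s r_\eta d\eta}\sigma_s\,dB_s\bigr)$, where the stochastic integral is a genuine $\mathbf{F}$-It\^o integral by Remark~\ref{rem:prop-fI}.i), since its integrand is bounded and $\mathbf{F}$-adapted.

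Inserting this into the cost functional~\eqref{functional_part}, the map $\varepsilon\mapsto\mathcal{J}(x,t;u^{\varepsilon})$ becomes an explicit quadratic polynomial in $\varepsilon$ whose coefficients are finite by $u^{*}\in L^{2}$, the boundedness of $\theta_0$, $r$, $\sigma$, and the $L^{2}$-integrability of $\tilde{r}-r$. Since $\varepsilon=0$ is a minimum, the linear coefficient must vanish; after the discount factors $e^{-\int_0^T r_s ds}$ and $e^{\int_0^T r_s ds}$ cancel, the resulting identity reads $\mathbb{E}[\theta_0(N_{t+h}-N_t)]=\mathbb{E}[\theta_0(A_{t+h}-A_t)]$, where $N_r:=\int_0^r e^{-\int_0^s r_\eta d\eta}\sigma_s\,dB_s$ is a square-integrable $\mathbf{F}$-martingale and $A_r:=\int_0^r\bigl(2au^{*}_s-e^{-\int_0^s r_\eta d\eta}(\tilde{r}_s-r_s)\bigr)\,ds$ is a continuous $\tilde{\mathbf{G}}$-adapted finite-variation process. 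Letting $\theta_0$ range over all bounded $\tilde{\mathbf{G}}_t$-measurable random variables (and $t,h$ over all admissible pairs) produces $\mathbb{E}[N_{t+h}-N_t\mid\tilde{\mathbf{G}}_t]=\mathbb{E}[A_{t+h}-A_t\mid\tilde{\mathbf{G}}_t]$, so $N-A$ is a $\tilde{\mathbf{G}}$-martingale and hence $N$ itself is a $\tilde{\mathbf{G}}$-semimartingale.

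To recover $B$, set $\rho_s:=e^{-\int_0^s r_\eta d\eta}\sigma_s$, which is $\mathbf{F}$-adapted and, by Hypotheses~\ref{A2} and \ref{A1}.2, satisfies $0<c\le\rho_s\le C$ for deterministic constants. Then $B_{\cdot}=\int_0^{\cdot}\rho_s^{-1}\,dN_s$ as $\mathbf{F}$-It\^o integrals. Since $\rho^{-1}$ is bounded and $\mathbf{F}$-adapted (hence $\tilde{\mathbf{G}}$-adapted), the agreement of stochastic integrals on the intersection of domains recalled in the introduction (It\^o~\cite{ito76}, Russo--Vallois~\cite{RV1993}) guarantees that the integral of $\rho^{-1}$ against the $\tilde{\mathbf{G}}$-semimartingale $N$ coincides with $B$. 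Hence $B$ is a $\tilde{\mathbf{G}}$-semimartingale, which is the claim.

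The main obstacle I anticipate is the rigorous justification of the first-order condition: one must verify that the quadratic expansion in $\varepsilon$ commutes with the expectation so that the linear coefficient can really be read off directly, and that conditioning on $\tilde{\mathbf{G}}_t$ preserves the $\mathbf{F}$-It\^o integral term (for which the boundedness of $\theta_0$ and of the integrand is crucial). Both reduce to standard dominated-convergence and $L^{2}$-isometry arguments under Hypotheses~\ref{A2} and \ref{A1}, but this is the step at which all the assumptions get used. A secondary technical point is to confirm that Remark~\ref{rem:prop-fI}.i) indeed converts the forward integral appearing in the perturbation into an $\mathbf{F}$-It\^o integral, so that multiplying by the $\tilde{\mathbf{G}}_t$-measurable factor $\theta_0$ and taking expectation is unambiguous.
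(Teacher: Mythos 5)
Your proposal is correct and follows essentially the same route as the paper: perturb $u^{*}$ in the direction $\chi_{(t,t+h]}\theta_0$, use the affine structure of \eqref{main:eq} (via the explicit random field \eqref{auxrandfiesinsf}) to make the first-order condition explicit, deduce from the arbitrariness of $\theta_0$ that the drift-plus-stochastic-integral process is a $\tilde{\mathbf{G}}$-martingale, and recover $B$ by integrating the bounded inverse of $e^{-\int_0^{\cdot}r}\sigma$ against the resulting $\tilde{\mathbf{G}}$-semimartingale. The only cosmetic difference is that you phrase the optimality condition through the quadratic polynomial in $\varepsilon$, whereas the paper computes the directional derivative $\nabla_{\hat y}F$ directly; the content is identical.
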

\begin{proof}
In order to simplify the notation we use the the convention
\begin{equation*}
b_t:=\int_0^tr_sds,\quad t\in[0,T].
\end{equation*}
Consider the functional $H$ defined as follows 
\begin{equation*}
H(u):=\mathbb{E}_{t,x}\left(\int_t^T au_s^2ds-e^{-b_T}X_T(u)\right),\quad
\mbox{for } u\in\mathcal{A}(t,x).
\end{equation*}
Let
 $\theta_s(\omega)=\chi_{(t,t+h]}(s)\theta_0(\omega)$ be an admissible control as in Hypothesis \ref{A1}.1 and define $F(y):=H(u^{*}+y\theta)$ for all $y\in \mathbb{R}$. Then the directional derivative of $F$ is 
\begin{align}\notag
\nabla_{\hat{y}}F&=\lim_{\varepsilon\downarrow 0}\frac{1}{\varepsilon}\mathbb{E}_{t,x}\left(\int_{t}^{T}a\left[u^{*}_s+y\theta_s+\varepsilon\hat{y}\theta_s\right]^2ds-e^{-b_T}X_T(u^{*}+y\theta+\varepsilon\hat{y}\theta)\right.\\\notag
&
\quad\left.-\int_{t}^{T}a\left[u^{*}_s+y\theta_s\right]^2ds-e^{-b_T}X_T(u^{*}+y\theta)\right)\\\notag
&=\hat{y}\mathbb{E}_{t,x}\left(\int_t^T2a(u^{*}_s+y\theta_s)\theta_sds\right)\\\notag
&\quad-\lim_{\varepsilon \downarrow 0}\mathbb{E}_{t,x}\left(e^{-b_T}\left[\frac{X_T(u^{*}+y\theta+\varepsilon \hat{y}\theta;x)-X_T(u^{*}+y\theta;x)}{\varepsilon}\right]\right)\\\label{derv}
&=\hat{y}\left[\mathbb{E}_{t,x}\left(\int_t^T2a(u^{*}_s+y\theta_s)\theta_sds\right)-\mathbb{E}_{t,x}\left(e^{-b_T}X_T(\theta;0)\right)\right],\quad \mbox{for all }\hat{y}\neq 0.
\end{align}
From the analysis of the random field \eqref{auxrandfiesinsf}, we know
\begin{align}\label{eq:ito}
e^{-b_T}&X_T(\theta;0)=\int_0^Te^{-b_s}(\tilde{r}_s-r_s)\chi_{(t,t+h]}(s)\theta_0ds+\int_0^Te^{-b_s}\sigma_s\chi_{(t,t+h]}(s)\theta_0d^{-}B_s.
\end{align}
Replacing \eqref{eq:ito} into \eqref{derv}, we get
\begin{align}\notag
\nabla_{\hat{y}}F=&\hat{y}\left[\mathbb{E}_{t,x}\left(\int_t^{t+h}2a (u^{*}_s+y\theta_0)\theta_0ds\right)\right.\\\label{deriv_direc}
&-\left.\mathbb{E}_{t,x}\left(\int_t^{t+h}e^{-b_s}(\tilde{r}_s-r_s)\theta_0ds+\int_t^{t+h}e^{-b_s}\sigma_s\theta_0d^{-}B_s\right)\right],\quad \hat{y}\neq 0.
\end{align}
 By hypothesis, the functional $H$ reaches its minimum  at $u^{*}$. Therefore,  $F$ has a minimum in $y=0$. Thus,  from equation \eqref{deriv_direc}, together with Remarks \ref{A1}.i) and \ref{A1}.iii), we obtain
\begin{equation*}\label{derif}
\mathbb{E}_{t,x}\left(\theta_0\left[\int_t^{t+h}[2au^{*}_s-e^{-b_s}(\tilde{r}_s-r_s)]ds-\int_t^{t+h}e^{-b_s}\sigma_sdB_s\right]\right)=0.
\end{equation*}
Since this equality holds for all $\tilde{\mathcal{G}}_t$-measurable random variable  
$\theta_0$,  we have established
\begin{equation}\label{N}
\mathbb{E}_{t,x}\left(\left.\int_t^{t+h}[2au^{*}_s-e^{-b_s}(\tilde{r}_s-r_s)]ds-\int_t^{t+h}e^{-b_s}\sigma_sdB_s\right|\tilde{\mathcal{G}}_t\right)=0.
\end{equation}

Now, for any admissible control $u\in \mathcal{A}(t,x)$, we denote 
\begin{equation*}
N_u(t)=\int_0^t\left[2au_s-e^{-b_s}(\tilde{r}_s-r_s)\right]ds-\int_0^te^{-b_s}\sigma_s dB_s,\quad t\in[0,T].
\end{equation*}
In consequence, from identity \eqref{N} and point 1 of Hypothesis \ref{A2}, we have
\begin{equation*}
E_{t,x}\left(\left.N_{u^{*}}(t+h)\right|\mathcal{G}_t\right)=N_{u^{*}}(t),
\end{equation*}
since $N_{u^{*}}(t)$ is $\tilde{\mathcal{G}}_t$-measurable. Thus $N_{u^{*}}(t)$ is a $\tilde{\mathbf{G}}-$martingale which implies that $R_t=\int_0^te^{-b_s}\sigma_s dB_s$ is a $\tilde{\mathbf{G}}-$semimartingale. Finally, Hypothesis \ref{A2}.2 gives 
\begin{equation*}
\int_0^te^{b_s}\sigma^{-1}_sdR_s=B_t, \quad t\in[0,T],
\end{equation*}
which is a $\tilde{\mathbf{G}}-$semimartingale, therefore, the proof is complete.
\end{proof}

\begin{cor}
Let $u\in\mathcal{A}(t,x)$. Suppose  that the process
\begin{equation*}\label{N_u}
N_u(t)=\int_0^t\left[2au_s-\exp\left(-\int_0^sr_{\tau}d\tau\right)(\tilde{r}_s-r_s)\right]ds-\int_0^t\exp\left(-\int_0^sr_{\tau}d\tau\right)\sigma_sd^{-}B_s,
\end{equation*}
is a $\tilde{\mathbf{G}}$-martingale. Then $u$ is an optimal control for the problem \eqref{Prob} with the functional \eqref{functional_part}.
\end{cor}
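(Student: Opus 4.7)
The plan is to exploit the strict convexity of $\mathcal{J}(t,x;\cdot)$ and read the hypothesis that $N_u$ is a $\tilde{\mathbf{G}}$-martingale as the first-order optimality condition at $u$. Because $a>0$ and the terminal payoff $-e^{-b_T}X_T(u)$ depends linearly on $u$ via the linear wealth equation \eqref{main:eq}, the map $v\mapsto\mathcal{J}(t,x;v)$ is strictly convex on $\mathcal{A}(t,x)$, so it suffices to check directly that $\mathcal{J}(t,x;v)\ge\mathcal{J}(t,x;u)$ for every competitor $v\in\mathcal{A}(t,x)$.

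First I would solve \eqref{main:eq} explicitly via the integrating factor $e^{-b_s}$,
\begin{equation*}
e^{-b_T}X_T(u)=e^{-b_t}x+\int_t^T e^{-b_s}(\tilde r_s-r_s)u_s\,ds+\int_t^T e^{-b_s}u_s\sigma_s\,d^{-}B_s,
\end{equation*}
substitute into \eqref{functional_part}, and expand with $v_s^2-u_s^2=(v_s-u_s)^2+2u_s(v_s-u_s)$ to obtain
\begin{equation*}
\mathcal{J}(t,x;v)-\mathcal{J}(t,x;u)=\mathbb{E}_{t,x}\!\left(\int_t^T a(v_s-u_s)^2\,ds\right)+\Delta(v,u),
\end{equation*}
where
\begin{equation*}
\Delta(v,u)=\mathbb{E}_{t,x}\!\left(\int_t^T(v_s-u_s)[2au_s-e^{-b_s}(\tilde r_s-r_s)]\,ds-\int_t^T e^{-b_s}(v_s-u_s)\sigma_s\,d^{-}B_s\right)
\end{equation*}
is the expected integral of $v-u$ against $dN_u$. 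Since the first term is nonnegative, the whole claim reduces to showing $\Delta(v,u)=0$.

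To evaluate $\Delta$ I would repeat the last step of the proof of Theorem \ref{the:mainext}: the $\tilde{\mathbf{G}}$-martingale property of $N_u$ expresses $\int_0^\cdot e^{-b_s}\sigma_s\,d^{-}B_s$ as a $\tilde{\mathbf{G}}$-semimartingale, and Hypothesis \ref{A1}.2 together with the boundedness of $r,\sigma$ lets one invert by $e^{b_s}\sigma_s^{-1}$ to deduce that $B$ itself is a $\tilde{\mathbf{G}}$-semimartingale. Remark \ref{rem:prop-fI}.ii) then replaces every forward integral appearing in $\Delta(v,u)$ by the corresponding It\^o integral against the $\tilde{\mathbf{G}}$-semimartingale $B$, so that $\Delta(v,u)=\mathbb{E}_{t,x}\left(\int_t^T(v_s-u_s)\,dN_u(s)\right)$, which vanishes by the martingale property of $N_u$.

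The principal technical obstacle is integrability: for $\int_t^T(v_s-u_s)\,dN_u(s)$ to be a true martingale with zero expectation one needs $\mathbb{E}_{t,x}\int_t^T(v_s-u_s)^2\,d[N_u]_s<\infty$. Because $B$ has pathwise quadratic variation $[B]_t=t$ and the martingale part of $N_u$ is $-\int_0^\cdot e^{-b_s}\sigma_s\,dB_s$ with bounded integrand (Hypothesis \ref{A2}), $d[N_u]_s$ is dominated by a bounded multiple of $ds$, and the required estimate follows from the $L^2$ admissibility of $u$ and $v$ stated at the beginning of Section \ref{INVTVT}.
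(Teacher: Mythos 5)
Your proof is correct, and it reaches the conclusion by a route that differs in presentation from the paper's. The paper proceeds variationally: it recycles the directional-derivative computation \eqref{deriv_direc} from the proof of Theorem \ref{the:mainext}, observes that the martingale hypothesis on $N_u$ makes $\nabla_{\hat y}F$ vanish for every simple direction $\theta=\sum_i\theta^i\chi_{(t_i,t_{i+1}]}$ with $\theta^i$ bounded and $\tilde{\mathcal{G}}_{t_i}$-measurable, extends this to all of $\mathcal{A}(t,x)$ by density of such simple processes in the square-integrable $\tilde{\mathbf{G}}$-progressive processes, and then concludes (leaving the convexity step implicit). You instead write the exact global identity $\mathcal{J}(t,x;v)-\mathcal{J}(t,x;u)=\mathbb{E}_{t,x}\bigl(\int_t^T a(v_s-u_s)^2\,ds\bigr)+\Delta(v,u)$ and kill the cross term $\Delta$ directly as the expectation of a stochastic integral against the martingale $N_u$. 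Both arguments rest on the same two pillars, which you identify correctly: the martingale hypothesis forces $B$ to be a $\tilde{\mathbf{G}}$-semimartingale (so Remark \ref{rem:prop-fI}.ii) converts every forward integral into an It\^o integral), and the first-order term is an $N_u$-increment. What your version buys is a self-contained argument that avoids the density step, makes the convexity explicit rather than tacit, and in fact yields strict inequality unless $v=u$ $ds\times dP$-a.e., hence uniqueness of the optimizer; your integrability check for the true-martingale property of $\int (v-u)\,dN_u$ is also the right one. The only presentational caveat is ordering: the explicit integrating-factor formula for $e^{-b_T}X_T(v)$ and the identification of the forward integrals in $\Delta$ with It\^o integrals should come after you have extracted the $\tilde{\mathbf{G}}$-semimartingale property of $B$ from the hypothesis, since before that step the forward stochastic calculus does not support the product rule you are using; rearranged that way, the argument is complete.
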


\begin{proof}
By the proof of Theorem \ref{the:mainext}, we have that the $\mathbf{F}$-Brownian motion
$B$ is also a $\tilde{\mathbf{G}}$-semimartingale and \eqref{deriv_direc} holds
when we write $u$ instead of $u^*$.
Moreover, by Remark \ref{rem:prop-fI}.2, we get
\begin{align}\notag
\nabla_{\hat{y}}F=&\hat{y}\left[\mathbb{E}_{t,x}\left(\int_0^T2au_s\theta_sds\right)\right.\\ \label{eq:revder0}
&-\left.\mathbb{E}_{t,x}\left(\int_0^Te^{-b_s}(\tilde{r}_s-r_s)\theta_sds+\int_0^T
e^{-b_s}\sigma_s\theta_sdB_s\right)\right]=0.
\end{align}
for  $\hat{y}\neq 0$ and $\theta\in\mathcal{A}(t,x)$ of the form
\begin{equation*}
\theta_s=\sum_{i=0}^{N-1}\theta^{i}(\omega)\chi_{(t_i,t_{i+1}]}(s),\quad 0\leq s\leq T,
\end{equation*}
where $\theta^{i}$ is a   bounded  and  $\tilde{\mathcal{G}}_{t_i}$-measurable 
random variable and $0=t_0<t_1<\cdots <t_N=T$. 
Let $\mathcal{A}_0$ be the set of such processes $\theta$. Finally, using that $\mathcal{A}_0$ is dense in the set of all the 
square-integrable and $\tilde{\mathbf{G}}$-progressively measurable processes, it is not
difficult to see that \eqref{eq:revder0} is also satisfied when
$\theta$ belongs to $\mathcal{A}(t,x)$ and, therefore, the proof is complete.
\end{proof}

\section{Application of the verification theorem under enlargement of filtrations} \label{sec:exa}
The aim of this section is to study two examples through the extended verification theorem analyzed in Section \ref{TVT} (i.e., Theorem \ref{the:exverthe}).

\begin{example}\label{exifpriv1}
Let $r$ be a positive constant and $\sigma$ an $\mathbf{F}$-adapted bounded process. Consider the controlled stochastic process $X$ given by
\begin{equation}\label{eq:priexam}
X_t=x+\int_0^trX_sds+\int_0^tu_s\sigma_sdB_s,\quad t\in[0,T].
\end{equation}
However, as we have already pointed out, if we have additional information represented by a random variable 
$L$ satisfying the conditions of Section \ref{sec:one} (i.e., the filtration
$\mathbf{G}$ in \eqref{newFil} is such that  the $\mathbf{G}$-adapted process 
$\tilde{B}$ in \eqref{ds} is a $\mathbf{G}$-Brownian motion), the equation \eqref{eq:priexam} is equivalent to the  stochastic differential equation
(driven by the $\mathbf{G}$-Brownian motion $\tilde{B}$)
 \begin{equation}\label{eq:eje1_contr}
 X_t=x+\int_0^t\left(rX_s+u_s\alpha_s(L)\sigma_s\right)ds+\int_0^tu_s\sigma_sd\tilde{B}_s.
 \end{equation} 
Our purpose is to optimize the wealth at the end time $T$ reducing the costs of the control $u$. That is, to solve the  problem
\begin{equation}\label{op_prob_eje1}
\inf_{u\in\mathcal{A}(t,x)}\mathcal{J}\left(t,x;u\right):=\inf_{u\in\mathcal{A}(t,x)}\mathbb{E}_{t,x}\left(\int_t^Tau_s^2ds-b X_T\right),\quad \mbox{with } a, b>0.
\end{equation}
So,  the corresponding HJB problem associated with \eqref{eq:eje1_contr} and
\eqref{op_prob_eje1} is to find a subset $\Omega_0\subset\Omega$ such that $P(\Omega_0)=1$ and, on $\Omega_0$, compute a solution to the HJB equation
\begin{equation}\label{jhbe_eje1}
\begin{cases}
\inf_{u\in \mathbb{R}}\left\{\partial_tG(t,x)+\frac{1}{2}u^2\sigma_t^2\partial_{xx}G(t,x)+\left(rx+u\alpha_t(L)\sigma_t\right)\partial_xG(t,x)+au^2\right\}=0 &\text{in }[0,T)\times\mathbb{R}, \\
\mathbb{E}_{t,x}\left(G(T,X_{T})\right)= -b\mathbb{E}_{t,x}\left(X_T\right).
\end{cases}
\end{equation}
Note that the argument of the infimum in equation \eqref{jhbe_eje1} is a polynomial of degree 2 on the variable $u$. Thus, using the second derivative criterion, we obtain the  optimal control 
\begin{equation}\label{optcontex1}
u^{*}(L)=-\frac{\alpha_t(L)\sigma_t\partial_xG(t, X_t)}{\sigma_t^2\partial_{xx}G(t, X_t)+2a},\quad t\in[0,T].
\end{equation}
Since $u^{*}(L)$ belongs to the argument of the infimum, then it can be replaced into \eqref{jhbe_eje1} to get the equation
\begin{equation}\label{eq:aux:eje1}
\partial_tG(t,x)+rx\partial_xG(t,x)-\frac{1}{2}\frac{\alpha_t^2(L)\sigma_t^2(\partial_xG(t,x))^2}{\sigma_t^2\partial_{xx}G(t,x)+2a}=0,\quad (t,x)\in[0,T)\times \mathbb{R}.
\end{equation}
Now, we propose the function $G(t,x)=f(t)x+g_t$, where $f$ is a $\mathcal{B}([0,T])$-measurable function and $g$ a $\mathbf{G}$-adapted process, as a candidate of the solution to equation \eqref{eq:aux:eje1}. In this manner we compute the partial derivatives of $G$ and we substitute them in \eqref{eq:aux:eje1} to get
\begin{equation*}
4a\left(xf^{\prime}(t)+g^{\prime}_t+rxf(t)\right)-\alpha^2_t(L)\sigma_t^2f^2(t)=0,\quad (t,x)\in[0,T)\times \mathbb{R}.
\end{equation*}
In consequence,
$$
f^{\prime}(t)+rf(t)=0$$
and
$$
4ag^{\prime}_t-\alpha_t^2(L)\sigma_t^2f^2(t)=0.
$$
Note that the last equation imposes that $\Omega_0=\{\omega\in\Omega:
\alpha(L)\in L^2([0,T])\}$.
Under the conditions $f(T)=-b$ and $\mathbb{E}_{t,x}(g_T)=0$, the solutions
for $f$ and $g$ are
$$
f(t)=-be^{-r(t-T)}$$
and
$$g_t=\frac{b^2}{4a}\int_0^t\sigma_s^2\alpha_s^2(L)e^{-2r(s-T)}ds-\rho_0,$$
where the constant $\rho_0$ is given by
$$
\rho_0=\frac{b^2}{4a}\mathbb{E}_{t,x}\left(\int_0^T\sigma_s^2\alpha_s^2(L)e^{-2r(s-T)}ds\right).
$$
Hence, the solution $G$ of the HJB-equation \eqref{jhbe_eje1} is 
$$
G(t,x)=-xbe^{-r(t-T)}+\frac{b^2}{4a}\int_0^t\sigma_s^2\alpha_s^2(L)e^{-2r(s-T)}ds-\rho_0.
$$
Therefore, equality \eqref{optcontex1} implies that the optimal control for the problem \eqref{op_prob_eje1} is determined by 
\begin{equation}\label{optcontex1iva}
u^{*}(L)=\frac{\alpha_t(L)\sigma_tbe^{-r(t-T)}}{2a},
\end{equation}
while the value function is
$$V(t,x)=-\mathbb{E}_{t,x}\left(xbe^{-r(t-T)}+\frac{b^2}{4a}\int_t^T\sigma_s^2\alpha_s^2(L)e^{-2r(s-T)}ds\right)
$$
due to Theorem \ref{the:exverthe}.

Finally, in order to have that that $u^*$ given in \eqref{optcontex1iva} is an admissible control, by Definition \ref{def admcontrol}, we need to verify that it belongs to $L^p([0,T]\times\Omega)$, for all $p>1$.
An example of random variable $L$ such that $\alpha(L)$ defined in \eqref{ds}
is in  $L^p([0,T]\times\Omega)$, for all $p>1$ is
$$L=\int_0^{T_1}m(s)dB_s.$$
Here $T_1>T$, $m\in L^2([0,T_1])$ and $m\neq0$, with probability 1. We can use 
Yor and Mansuy \cite[Section 1.3]{Mansuy}, Navarro \cite[Section 3]{navarro} or Le\'on et al. \cite{LRNNU03}
to see that
$$\alpha_t(x)=\frac{x-\int_0^{T_1}m(s)dB_s}{\int_t^{T_1}m(s)^2ds}\quad
t\in[0,T].$$
In consequence equality \eqref{optcontex1iva} provides an admissible control.
\end{example}

\begin{example}
Here, we consider the controlled  stochastic differential equation
$$X_t=x+\int_0^tu_sds+\int_0^tu_sdB_s,\quad t\in[0,T].$$
Note that in this case, $\mathcal{A}(t,x)$ is the family of all the $\mathbf{G}$-progressively measurable processes $u:[t_0,T]\times\Omega\rightarrow \mathbb{R}$
such that
$$
\mathbb{E}\left(\int_{t_0}^{T}|u_s|^kds\right)<\infty,\quad \text{ for all } k\in\mathbb{N}.
$$
Remember that the filtration $\mathbf{G}$ is defined in \eqref{newFil}.

The cost function is given by \eqref{op_prob_eje1} again, that is,
$$
\mathcal{J}\left(t,x;u\right):=\mathbb{E}_{t,x}\left(\int_t^Tau_s^2ds-b X_T\right),\quad \mbox{with } a, b>0.$$

We observe that in the classical theory of stochastic control (i.e, there is not extra information), an optimal control is
$$u^*\equiv\frac{b}{2a}.$$

Now, as in Example  \ref{exifpriv1}, we work with $\mathbf{G}$-progressively measurable controls. From  Theorem \ref{the:exverthe}, we must study the HJB-equation$$
\begin{cases}
\inf_{u\in \mathbb{R}}\left\{\partial_tG(t,x)+\frac{1}{2}u^2\partial_{xx}G(t,x)+u\left(\alpha_t(L)+1\right)\partial_xG(t,x)+au^2\right\}=0, &\text{in }[0,T),\times\mathbb{R}, \\
\mathbb{E}_{t,x}\left(G(T,X_{T})\right)= -b\mathbb{E}_{t,x}\left(X_T\right).
\end{cases}
$$
Thus, proceeding as in Example \ref{exifpriv1}, we propose the optimal control
\begin{equation}\label{optcon2}
u^{*}(L)=-\frac{\left(\alpha_t(L)+1\right)\partial_xG(t, X_t)}{\partial_{xx}G(t, X_t)+2a},\quad t\in[0,T].\end{equation}
Substituting this control in previous HJB-equation, we have to solve the equation
\begin{eqnarray*}
\partial_tG(t,x)-\frac{\left(\alpha_t(L)+1\right)^2(\partial_xG(t,x))^2}{2(\partial_{xx}G(t,x)+2a)}&=&0,\quad (t,x)\in[0,T)\times \mathbb{R}\\
\mathbb{E}_{t,x}\left(G(T,X_{T})\right)&=&-b\mathbb{E}_{t,x}\left(X_T\right).
\end{eqnarray*}
In order to continue with our analysis, we proceed as in Example \ref{exifpriv1} again. 
It means, we propose a function $G$ of the form
$$G(t,x)=h(t)-bx,\quad  (t,x)\in[0,T)\times \mathbb{R},$$
to show that 
$$G(t,x)=\frac{b^2}{4a}\int_0^t\left(\alpha_s(L)+1\right)^2ds-bx-\rho_0,$$
is the function that we are looking for, if $\rho_0=\frac{b^2}{4a}\mathbb{E}_{t,x}(\int_0^T\left(\alpha_s(L)+1\right)^2ds),$ 
which, together with \eqref{optcon2}, yields
$$u^{*}(L)=\frac{b\left(\alpha_t(L)+1\right)}{2a},\quad t\in[0,T].$$
As we have already pointed out, the case that $\alpha(L)\equiv0$ (i.e., there is not extra information),  we
have
$$u^*\equiv\frac{b}{2a}.$$
Now, it is easy to apply Theorem \eqref{the:exverthe} to figure out the value function.

\end{example}

\section*{Acknowledgment}
The work of L. Peralta is supported by UNAM-DGAPA-PAPIIT grants IA100324, IN102822 (Mexico).


\bibliographystyle{plain}
\small{\bibliography{biboptcont}}

\end{document}